\theoremstyle{plain}
\newtheorem{thm}{Theorem}[section]
\newtheorem{lem}[thm]{Lemma}
\newtheorem{cor}[thm]{Corollary}
\theoremstyle{definition}
\newtheorem{defn}[thm]{Definition}
\newtheorem{ex}[thm]{Example}
\theoremstyle{remark}
\newtheorem{rem}[thm]{Remark}
\title{\textbf{Positive rank gradient and $p$-largeness for groups defined by presentations with $p$-deficiency less than or equal to one}}
\author{Mariano Zer\'{o}n-Medina Laris}
\begin{document}

\maketitle

\begin{abstract}
It is an immediate consequence of the results in \cite{puchta-yiftach} that a presentation with $p$-deficiency greater than one defines a group with positive rank gradient. By results in \cite{jack-anitha}, we know that a finite presentation with $p$-deficiency greater than one defines a $p$-large group. In both \cite{puchta-yiftach} and \cite{anitha}, extensions of these results were obtained for the $p$-deficiency one case. In this paper we consider the case when the presentation has $p$-deficiency less than or equal to one.
\end{abstract}

\section{Introduction}

Say $G$ is a finitely generated group given by a presentation $Q=\langle X|R\rangle$, where $X$ freely generates $F_n$, the non-abelian free group of rank $n$.  The author of \cite{puchta} defined the $p$-deficiency of $Q$ as follows. Let $\nu_{p}(r)$, where $r\in R$, be the largest integer $k$, such that there is some $s\in F_n$ with $r=s^{p^{k}}$. The \emph{$p$-deficiency} of $Q$ is defined as 
\[
def_p(Q)=n-\sum_{r\in R}\ p^{-\nu_{p}(r)}-1,
\]
and the $p$-deficiency $def_p(G)$ of $G$, as the supremum of the deficiencies of all presentations of $G$ with finite generating set. In \cite{jack-anitha}, the $p$-deficiency of $Q$ is defined as
\[
n-\sum_{r\in R}\ p^{-\nu_{p}(r)}.
\]
Throughout this paper, we make use of the latter definition.

\vskip 2mm

The $p$-deficiency of a group $G$ is related to invariants such as largeness and the rank gradient of a group. The \emph{rank gradient} of a finitely generated group $G$ is defined by
\[
RG(G):= \underset{H\underset{f}{\leqslant}G }{\text{inf}} \left\{ \dfrac{d(H)-1}{|G:H|} \right\},
\]
where $H$ is a finite index subgroup of $G$ and $d(H)$ the rank of $H$. Having positive rank gradient (that is, $RG(G)>0$) is a strong property which is invariant under finite index subgroups and finite index supergroups. By the Nielson-Schreier index formula $RG(F_n)=n-1$. In \cite{lackers RG}, M. Lackenby proved that if $G$ is a non-trivial free product $A\ast B$, where either $A$ or $B$ is not isomorphic to $C_2$, the cyclic group of order two, then $RG(G)>0$. On the other hand, $SL_n(\mathbb{Z})$ for $n\geq 3$, ascending $HNN$-extensions, and direct products of finitely generated infinite residually finite groups all have zero rank gradient (\cite{lackers RG}, \cite{abert-nikolov}).
\vskip 2mm

It is worth pointing, as we will use this fact later, that the rank minus one is \emph{submultiplicative} with respect to finite index subgroups, which means
\begin{equation}\label{eq; submult rank}
\dfrac{d(H)-1}{|G:H|}\leq d(G)-1,
\end{equation}
whenever $H$ is a finite index subgroup of $G$. 
\vskip 2mm

Theorem $3$ in \cite{puchta-yiftach} gives the necessary tool to show that if $G$ has $p$-deficiency greater than one, then the group $G$ has positive rank gradient (see \cref{rem: pos p def implies pos rank grad}). Similar ideas lead to the construction of finitely generated infinite residually finite $p$-groups with positive rank gradient (\cite{puchta}). Apart from the one in \cite{osin}, this is the only other construction of such groups.
\vskip 2mm

A group $G$ is large if it has a finite index normal subgroup $H$ with a non-abelian free quotient. A group $G$ is $p$-large if it has a normal subgroup of index in $F_n$ a power of a prime $p$ which has a non-abelian free quotient. Clearly, a $p$-large group is large. However, the converse does not hold. For instance, in \cite{jack-anitha}, the authors show that $A_5\ast A_5$ is not $p$-large for all primes $p$, but the group is nonetheless large. We refer to \cite{jack-anitha} for a more thorough treatment of $p$-largeness, and to \cite{lackers detecting large groups} for a characterisation of it.
\vskip 2mm

Largeness is a strong property invariant under finite index subgroups and finite supergroups, as well as prequotients. Moreover, if a group $G$ is large, then $G$ contains a non-abelian free subgroup, $G$ is $SQ$-universal, $G$ has finite index subgroups with arbitrarily large Betti number, among other properties.
\vskip 2mm

Examples of large groups include all those groups defined by presentations with at least two more generators than relators (such as non-abelian free groups); fundamental groups of closed orientable surfaces of genus greater than one; fundamental groups of closed $3$-dimensional hyperbolic manifolds; free products $G_1\ast G_2$ where both $G_1$ and $G_2$ have proper finite index subgroups and at least one of them has a finite index subgroup of index at least $3$; some families of mapping tori (\cite{button}); some families of triangle groups (\cite{gen traingle groups}); among other groups.
\vskip 2mm 

The authors of \cite{jack-anitha}, proved that if $G$ has $p$-deficiency greater than one, then $G$ is $p$-large.
\vskip 2mm

If a presentation has $p$-deficiency one, then neither largeness nor positive rank gradient may be concluded. For instance, consider the integers with its usual presentation or the infinite dihedral group $D_{\infty} = \langle x_1,x_2 | x_1^{2}, x_2^{2}\rangle$. The first has $p$-deficiency one for every prime $p$, while the second has $2$-deficiency one. However, neither are large nor have positive rank gradient; all the finite index subgroups of $\mathbb{Z}$ are isomorphic to $\mathbb{Z}$, while $D_{\infty}$ has a copy of $\mathbb{Z}$ as a finite index subgroup. Nevertheless, under suitable conditions, a $p$-deficiency one presentation defines a group with strong properties. For example, in \cite{puchta-yiftach}, the authors found conditions for when a presentation with $p$-deficiency one has a finite index subgroup with $p$-deficiency greater than one. As both largeness and positive rank gradient are invariant under finite supergroups, the whole group enjoys these properties too. Also, it was proved in \cite{anitha} that if the presentation is finite and has $p$-deficiency one, then the group it defines has a finite index subgroup with positive first Betti number.
\vskip 2mm

There are other properties worth mentioning in relation to largeness and having positive rank gradient. A finitely generated group $G$ has \emph{property} ($T$) if every isometric action of $G$ on a Hilbert space has a global fixed point. It has \emph{property} ($\tau$) if for some (equivalently any) finite generating set $S$ for $G$, the set of Cayley graphs Cay$(G/N,S)$ forms an expander family, where $N$ varies over all finite index normal subgroups of $G$. It is amenable if it admits a finitely additive, left invariant, probability measure. Property ($T$) implies ($\tau$), but not vice-versa. However, an amenable group with ($T$) must be finite, as must a residually finite amenable group with ($\tau$). All three properties are preserved under quotients, extensions and subgroups of finite index, whereas amenability is further preserved under arbitrary subgroups.
\vskip 2mm

The integers are an example of a group which does not have property ($\tau$). Since having property ($\tau$) is invariant under finite index subgroups and quotients, any group with a finite index subgroup that surjects onto the integers, such as a large group, does not have property ($\tau$) and hence not property ($T$). Also, large groups contain non-abelian free subgroups. Since having non-abelian free subgroups implies non-amenability, then large groups are non-amenable too. Finally, if the group is residually finite, having positive rank gradient implies the group is non-amenable. From this perspective, properties ($T$), ($\tau$) and amenability, are antipodal to largeness and having positive rank gradient. Therefore, by proving the latter properties we exclude the former ones.
\vskip 2mm

In this paper we consider groups defined by presentations with $p$-deficiency less than or equal to one. By using ideas from \cite{puchta-yiftach} and \cite{anitha} we get a more complete picture of when these groups are large or have positive rank gradient.
\vskip 2mm

\textbf{Acknowledgements}
\vskip 2mm

This work is part of the author's PhD done under the supervision of Jack Button. The author would like to thank Jack for all the support and advice needed for the completion of this work. Also, the author would like to thank the Mexican Council of Science and Technology (CONACyT) and the Cambridge Overseas Trust for their financial support all these years.

\section{Presentations with $p$-deficiency less than or equal to one}

From now on, we work with finitely generated groups. Whenever $P=\langle X|R\rangle$ is a presentation for the finitely generated group $G$, then $X$ freely generates a non-abelian free group of finite rank. Denote the canonical map induced by $P$ from $F$, the non-abelian free group of finite rank freely generated by $X$, to $G$, by $\varphi$. Call the intersection of finite index subgroups in $G$ the \emph{finite residual} and denote it by $R_G$. Call $G/R_G$ the \emph{residual quotient}. Denote the composition of $\varphi$ followed by the canonical map from $G$ to $G/R_G$ by $\psi$.
\vskip 2mm

Define the $p$-rank gradient as
\[
RG_p(G):= \underset{H\underset{f}{\trianglelefteq}G }{\text{inf}} \left\{ \dfrac{d_p(H)-1}{|G:H|} \right\},
\]
\begin{lem}\label{lemma pdef>1 implies RG_p>0}
Let $G$ be a finitely generated group with $p$-deficiency greater than one. Then $RG_p(G)>0$.
\end{lem}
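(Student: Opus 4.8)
The plan is to reduce the statement to a single inequality valid for every finite-index subgroup, and then pass to the infimum. Since $def_p(G)>1$ is by definition the supremum of the $p$-deficiencies of presentations of $G$ on a finite generating set, there is such a presentation $Q=\langle X\mid R\rangle$, say on $n$ generators, with $def_p(Q)>1$ (if every presentation had $def_p\le 1$ the supremum would be $\le 1$). Fixing this $Q$, the goal becomes to show
\[
\frac{d_p(H)-1}{|G:H|}\ \geq\ def_p(Q)-1\qquad\text{for every }H\underset{f}{\trianglelefteq}G,
\]
because taking the infimum over all finite-index normal subgroups then gives $RG_p(G)\geq def_p(Q)-1>0$. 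This is precisely the type of bound furnished by Theorem $3$ of \cite{puchta-yiftach}, which I would cite; the remainder of the sketch indicates how to obtain it directly, since that is where the content lies.

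For the inequality I would pass to the $j$-fold cover $\tilde Y\to Y$ of the presentation $2$-complex $Y$ of $Q$ corresponding to $H$, where $j=|G:H|$. Then $d_p(H)=\dim_{\mathbb{F}_p}H_1(H;\mathbb{F}_p)=\dim_{\mathbb{F}_p}H_1(\tilde Y;\mathbb{F}_p)$, and the $1$-skeleton of $\tilde Y$ contributes $j(n-1)+1$ to the first Betti number (equivalently, Reidemeister--Schreier gives $H$ that many generators). It remains to bound the number of $\mathbb{F}_p$-independent relations coming from the lifted $2$-cells. The key observation is that a relator $r$ with $\nu_{p}(r)=\nu$ is a power $s^{p^{\nu}}$, so the permutation of the $j$ cosets induced by $s$ has order dividing $p^{\nu}$; hence each of its cycles has $p$-power length $p^{a}$ with $a\leq\nu$. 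A lift of the $2$-cell over a cycle of length $p^{a}$ wraps $p^{\nu-a}$ times around the corresponding lifted loop, so it imposes the relation ``$p^{\nu-a}\cdot[\text{loop}]=0$'', which is trivial modulo $p$ unless $a=\nu$; moreover the $p^{a}$ lifts sitting over one cycle all impose the \emph{same} relation.

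Consequently $r$ contributes at most one $\mathbb{F}_p$-nontrivial relation per cycle of maximal length $p^{\nu}$, and there are at most $j\,p^{-\nu}$ such cycles. Summing over $R$ yields at most $\sum_{r\in R}j\,p^{-\nu_{p}(r)}$ independent relations, whence
\[
d_p(H)\ \geq\ j(n-1)+1-\sum_{r\in R}j\,p^{-\nu_{p}(r)}\ =\ j\bigl(def_p(Q)-1\bigr)+1,
\]
which rearranges to the desired bound. The main obstacle is exactly this relator count: one must verify both that the cycle lengths are forced to be $p$-powers bounded by $p^{\nu_{p}(r)}$ and that conjugate lifts collapse to a single mod-$p$ relation, since the naive count of $j|R|$ relators recovers only the ordinary-deficiency estimate and loses the fractional weighting that makes the hypothesis $def_p(Q)>1$ useful. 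Once this count is established (or Theorem $3$ of \cite{puchta-yiftach} is quoted), dividing by $j$ and taking the infimum over $H\underset{f}{\trianglelefteq}G$ completes the proof.
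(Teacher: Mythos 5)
Your proof is correct, and at the top level it follows the same skeleton as the paper's: produce a presentation $Q$ with $def_p(Q)>1$ (possible since $def_p(G)$ is defined as a supremum), establish
\[
\frac{d_p(H)-1}{|G:H|}\ \geq\ def_p(Q)-1
\]
for every finite-index normal subgroup $H$, and take the infimum. Where you differ is in how this inequality is justified. The paper obtains it by quoting two results as black boxes: the supermultiplicativity of $p$-deficiency minus one with respect to finite-index normal subgroups (\cite{puchta-yiftach}), and the bound $def_p(H)\leq d_p(H)$ (\cite{jack-anitha}, $(2.1)$), chained as $def_p(G)-1\leq \frac{def_p(H)-1}{|G:H|}\leq \frac{d_p(H)-1}{|G:H|}$. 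You offer the same citation (Theorem $3$ of \cite{puchta-yiftach}), but you also sketch a direct covering-space proof of the bound, and that sketch is sound: since $s^{p^{\nu}}=r$ is trivial in $G$, the permutation induced by $s$ on the $j$ cosets has order dividing $p^{\nu}$, so its cycles have $p$-power lengths $p^{a}$ with $a\leq\nu$; the lifts of the $2$-cell over a cycle of length $p^{a}$ impose the relation $p^{\nu-a}\cdot[\ell]=0$ on the lifted loop $\ell$, which is trivial in mod-$p$ homology unless $a=\nu$; and the lifts over a single cycle determine the same $1$-cycle, hence the same relation, giving at most $j\,p^{-\nu_p(r)}$ independent mod-$p$ relations per relator. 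This count effectively proves both cited facts in one stroke, since you bound $d_p(H)$ directly rather than passing through a Reidemeister--Schreier presentation of $H$ and its $p$-deficiency. The paper's route buys brevity; yours buys self-containedness, and it in fact yields the inequality for arbitrary finite-index subgroups, not only normal ones (though the definition of $RG_p$ used here only requires the normal case).
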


\begin{proof}
In \cite{puchta-yiftach} it was proved that $p$-deficiency minus one is supermultiplicative with respect to finite index normal subgroups of $G$. That is, if $H$ is a finite index normal subgroup of $G$, then
\[
def_p(G)-1\leq \dfrac{def_p(H)-1}{|G:H|}.
\]
Moreover, the $p$-deficiency of a group is a lower bound for the $p$-rank of a group (\cite{jack-anitha}, ($2.1$)), hence
\begin{equation}\label{eq: in first lemma}
def_p(G)-1\leq \dfrac{def_p(H)-1}{|G:H|}\leq \dfrac{d_p(H)-1}{|G:H|}.
\end{equation}
The result is obtained by taking the infimum over all the finite index normal subgroups of $G$.
\end{proof}
\vskip 2mm

\begin{rem}\label{rem: pos p def implies pos rank grad}
The $p$-rank of a group $G$ is smaller than or equal to the rank of $G$, hence from the proof of the previous lemma
\[
def_p(G)-1\leq\dfrac{d(H)-1}{|G:H|},
\]
where $H$ is a finite index normal subgroup of $G$. Taking the infimum over all finite index normal subgroups of $G$ on the right hand side gives
\[
def_p(G)-1\leq RG(G),
\]
and hence a group with $p$-deficiency greater than one has positive rank gradient.
\vskip 2mm
\end{rem}
\vskip 2mm

\begin{rem}
Let $G$ be a finitely generated group. Given $H$ a finite index subgroup of $G$, consider $Core_G(H)$ the core of $H$ in $G$. The core of $H$ in $G$  is defined as the intersection of all conjugates of $H$ by elements in $G$. The core of $H$ is a finite index normal subgroup of $G$ and hence by (\cref{eq; submult rank})
\[
\dfrac{d(Core_G(H))-1}{|G:Core_G(H)|}\leq\dfrac{d(H)-1}{|G:H|}\leq d(G)-1.
\]
Therefore, the rank gradient of $G$ is also computed by only considering the set of finite index normal subgroups in $G$.
\end{rem}
\vskip 2mm

In \cref{thm: main of section} (below) we consider a finitely generated group $G$ with a presentation $Q=\langle X|R\rangle$ which has the following characteristics. The set $X$ is finite and freely generates the non-abelian free group of rank $n$. The set $R$ may be separated into three disjoint sets. The first consists of a finite collection of elements $w_1,\ldots,w_r$ in $F_n$ which are not expressed as proper $p$-powers. Denote this set by $S_1$.
\vskip 2mm

The second set, which may be infinite, consists of elements $v^{p^{a}}$ in $F_n$, where $a>0$, such that the order of $\psi(v)$ in $G/R_G$, which we denote by $o(\psi(v),G/R_G)$, is exactly $p^a$. Denote this set by $S_2$. 
\vskip 2mm

The third set, which may also be infinite, consists of elements $u^{p^b}$ with $b>0$, such that $o(\psi(u),G/R_G)<p^b$. Denote this set by $S_3$.
\vskip 2mm

Construct a presentation $P$ from $Q$ in the following way. Take $X$ to be the set of generators of $P$ and take $S=S_1\cup S_2$ to be the set of relators of $P$. 
\vskip 2mm

\begin{thm}\label{thm: main of section}
Let $G$ be a finitely generated group with a presentation $Q=\langle X|R\rangle$ as described above. Consider the presentation $P=\langle X|S\rangle$ obtained from $Q$. If $def_p(P)>1$, then $RG(G)>0$.
\end{thm}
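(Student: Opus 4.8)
The plan is to bound the ordinary rank gradient from below by the $p$-rank gradient and then to show that the third block of relators $S_3$ is invisible to the mod-$p$ homology of every finite index subgroup, so that the estimate is governed entirely by $P$. Since $d_p(M)\le d(M)$ for every finitely generated group $M$, and since the rank gradient may be computed over finite index normal subgroups alone (the remark following \cref{rem: pos p def implies pos rank grad}), it suffices to prove $\frac{d_p(M)-1}{|G:M|}\ge def_p(P)-1$ for every finite index normal subgroup $M\trianglelefteq G$: this yields $\frac{d(M)-1}{|G:M|}\ge\frac{d_p(M)-1}{|G:M|}\ge def_p(P)-1$, and taking the infimum gives $RG(G)\ge def_p(P)-1>0$. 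Fixing such an $M$ of index $m$, I would pass to $\hat M:=\varphi^{-1}(M)\le F_n$, a free group of rank $m(n-1)+1$, and write down the Reidemeister--Schreier presentation of $M=\hat M/\ker\varphi$, whose relators are the rewrites of the conjugates $\{trt^{-1}:t\in T,\ r\in R\}$ for a Schreier transversal $T$. Abelianising and reducing modulo $p$ turns $d_p(M)$ into the number of Schreier generators minus the $\mathbb F_p$-rank of the matrix of these abelianised relators.

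The heart of the argument, and the step I expect to be the main obstacle, is to show that every conjugate of an $S_3$-relator contributes the zero vector to this matrix. Let $u^{p^b}\in S_3$, so that $o(\psi(u),G/R_G)=p^{c}$ with $c<b$. Because $M$ has finite index, $R_G\subseteq M$, so $G/M$ is a quotient of $G/R_G$; hence the image in $G/M$ of $u$, and of any conjugate $w:=tut^{-1}$, has order $p^{j}$ dividing $p^{c}$, so that $j\le c<b$. Consequently $\hat M=\ker(F_n\to G/M)$ contains $w^{p^{j}}$, and the relator factors as $t\,u^{p^b}\,t^{-1}=w^{p^b}=\bigl(w^{p^{j}}\bigr)^{p^{\,b-j}}$, a proper $p$-th power of an element of $\hat M$ since $b-j\ge 1$. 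Its class in $\hat M^{\mathrm{ab}}\otimes\mathbb F_p$ is therefore $p^{\,b-j}\bigl[w^{p^{j}}\bigr]=0$. (Here I would also note that conjugation acts trivially on $\hat M^{\mathrm{ab}}$, and that the exponent $b-j$ by which the rewritten relator is a $p$-power is controlled by the order $p^{j}$ of the image in $G/M$; this is exactly why the $S_2$-relators, whose images retain full order $p^{a}$ in the residual quotient and can thus descend to genuine non-$p$-powers, must be kept.)

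It follows that the mod-$p$ relation matrix of $M$ is unchanged if one discards all $S_3$-relators, so it coincides with the matrix obtained from the relators $S=S_1\cup S_2$. In other words $d_p(M)=d_p(\tilde M)$, where $\tilde M=\hat M/\langle\langle S\rangle\rangle\trianglelefteq\overline G$ is the finite index normal subgroup of $\overline G:=\langle X\mid S\rangle$ corresponding to $M$ (both arise as quotients of the same $\hat M$), and $|\overline G:\tilde M|=m$. Now $\overline G$ is presented by $P$, so $def_p(\overline G)\ge def_p(P)>1$, and the supermultiplicativity chain from the proof of \cref{lemma pdef>1 implies RG_p>0}, applied to $\tilde M\trianglelefteq\overline G$, gives $def_p(P)-1\le def_p(\overline G)-1\le\frac{def_p(\tilde M)-1}{m}\le\frac{d_p(\tilde M)-1}{m}=\frac{d_p(M)-1}{m}$. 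This is the required inequality, and the theorem follows.

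The only genuinely delicate point is the uniform $p$-divisibility in the second paragraph: it must hold for all finite index normal subgroups $M$ simultaneously, and this is precisely what the defining property of $S_3$ guarantees, namely that the order drops below $p^{b}$ already in $G/R_G$, combined with the containment $R_G\subseteq M$. The remaining ingredients—the Reidemeister--Schreier bookkeeping, the inequalities $def_p\le d_p$ and $d_p\le d$, and the reduction to normal subgroups—are routine given the results already established.
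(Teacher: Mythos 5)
Your proposal is correct and takes essentially the same route as the paper's own proof: the key step in both is that every conjugate of an $S_3$-relator becomes a $p$-th power of an element of the preimage of the finite index subgroup (because $R_G\subseteq M$ forces the order of $u$ to drop below $p^{b}$ already in $G/M$), so the mod-$p$ relation matrix and hence $d_p$ is unchanged when $S_3$ is discarded, after which supermultiplicativity of $p$-deficiency applied to the group defined by $P$ gives the bound. The only difference is organizational: the paper packages the estimate as $RG(G)\geq RG_p(K)>0$ via \cref{lemma pdef>1 implies RG_p>0} and a comparison of infima over the sets $A\subseteq B$, whereas you inline that lemma's supermultiplicativity chain to get the pointwise lower bound $def_p(P)-1$ for each finite index normal subgroup of $G$.
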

\begin{proof}

Denote the normal closure of $R$ in $F_n$ by $N$ and the normal closure of $S$ in $F_n$ by $M$. Let $K$ be the group defined by the presentation $P$. By assumption $def_p(P)>1$, therefore \cref{lemma pdef>1 implies RG_p>0} implies $RG_p(K)>0$. If $N=M$, then $G\cong K$, and hence $RG(G)>0$. Therefore, assume they are not.

\vskip 2mm

Consider the set $A$ of normal subgroups of finite index in $F_n$ which contain $N$. Similarly, define the set $B$ to be the set of normal subgroups of finite index in $F_n$ which contain $M$. Note that $M\lhd N\trianglelefteq F_n$ and $A\subseteq B$. Moreover, the set $A$ is in bijection with the set of all finite index normal subgroups of $G$, while the set $B$ is in bijection with the set of all finite index normal subgroups of $K$. The key is to prove that for every element $H$ in $A$, $d_p(H/N)=d_p(H/M)$. This would then imply
\vskip 1mm

\[
RG_p(K)=\underset{H\in B}{\text{inf}} \left\{ \dfrac{d_p(H/M)-1}{|F_n:H|} \right\}\leq \underset{H\in A}{\text{inf}} \left\{ \dfrac{d_p(H/M)-1}{|F_n:H|} \right\}
\]
\vskip 1mm
\[
= \underset{H\in A}{\text{inf}} \left\{ \dfrac{d_p(H/N)-1}{|F_n:H|} \right\}\leq \underset{H\in A}{\text{inf}} \left\{ \dfrac{d(H/N)-1}{|F_n:H|} \right\}= RG(G),
\]
\\
from where the result follows.

\vskip 2mm

Take $H$ in $A$. Compute a presentation for $H/N$ and a presentation for $H/M$ by using the Reidemeister-Schreier rewriting process (page $103$, \cite{lyndon}). Both presentations will have a generating set $Y$ with $(n-1)|F_n:H|+1$ elements. For each element in $R$, $H/N$ will have $|F_n:H|$ relators. The same goes for $H/M$ and $S$. As $S$ is contained in $R$, then the relators in the presentations for $H/N$ and $H/M$ coming from the relators in $S$ will be the same.
\vskip 2mm

Take a relator $u^{p^b}$ in $R$ which does not belong to $S$, that is, one that belongs to $S_3$. As $o(\psi(u),G/R_G)<p^b$, then $u^{p^{b-1}}$ is in $H$ and hence $u^{p^{b}}$ can be written as a $p$-th power of an element in $H$. As $u^{p^{b}}$ can be written as the $p$-th power of an element in $H$, then the $p$-rank of $H/M$ is the same as the $p$-rank of $H/\langle\langle M\cup u^{p^{b}}\rangle\rangle$. As this holds for all elements in $S_3$, then the $p$-rank of $H/N$ and the $p$-rank of $H/M$ are the same.

\end{proof}
\vskip 2mm

\begin{cor}\label{cor: rg>0 when order of minimal p root is less than its p power}
Let $G$ be a finitely generated group with presentation $Q=\langle X|R\rangle$, such that $def_p(Q)=1$. Say $X$ is finite and $R=\{ w_1,\ldots,w_r,w_{r+1}^{p^{a_1}},\ldots, w_{r+j}^{p^{a_j}},\ldots \}$, where $j\geq 1$ and $a_j\geq 1$. Suppose the order of $\psi(w_{r+j})$ in $G/R_G$ is strictly less than $p^{a_j}$, for some $j\geq 1$. Then $RG(G)>0$.
\end{cor}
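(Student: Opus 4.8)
The plan is to reduce the statement directly to \cref{thm: main of section} by splitting $R$ into the three sets $S_1$, $S_2$, $S_3$ that the theorem prescribes, and then checking that the associated presentation $P$ has $p$-deficiency greater than one. First I would set $S_1=\{w_1,\ldots,w_r\}$, the relators that are not proper $p$-powers, and deal with the remaining relators $w_{r+i}^{p^{a_i}}$ one at a time. Since each such relator lies in $N=\ker\varphi$, its image under $\psi$ is trivial, so $\psi(w_{r+i})^{p^{a_i}}=1$ in $G/R_G$ and $o(\psi(w_{r+i}),G/R_G)$ is therefore a power of $p$ dividing $p^{a_i}$. Consequently each $p$-power relator satisfies exactly one of $o(\psi(w_{r+i}),G/R_G)=p^{a_i}$ or $o(\psi(w_{r+i}),G/R_G)<p^{a_i}$; I place it in $S_2$ in the first case and in $S_3$ in the second. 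This partition is exhaustive and reproduces the hypotheses of \cref{thm: main of section} verbatim, with $P=\langle X\mid S_1\cup S_2\rangle$.

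By hypothesis there is at least one index $j$ with $o(\psi(w_{r+j}),G/R_G)<p^{a_j}$, so $S_3$ is non-empty. The remaining task is to verify $def_p(P)>1$. Writing $S=S_1\cup S_2$ and using that $R=S\sqcup S_3$ is a disjoint union, the defining sum for the $p$-deficiency splits as
\[
def_p(Q)=n-\sum_{w\in S}p^{-\nu_p(w)}-\sum_{w\in S_3}p^{-\nu_p(w)}=def_p(P)-\sum_{w\in S_3}p^{-\nu_p(w)}.
\]
Since $def_p(Q)=1$, this rearranges to $def_p(P)=1+\sum_{w\in S_3}p^{-\nu_p(w)}$. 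Each relator is a non-trivial element of the torsion-free group $F_n$, so every $\nu_p(w)$ is a finite non-negative integer and each term $p^{-\nu_p(w)}$ is strictly positive; as $S_3\neq\emptyset$ the sum is positive and hence $def_p(P)>1$. Applying \cref{thm: main of section} to this $Q$ and $P$ then gives $RG(G)>0$.

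The content of the corollary is carried entirely by \cref{thm: main of section}, so I do not expect a serious obstacle here; the only points that need care are bookkeeping ones. Specifically, I must confirm that the trichotomy $S_1,S_2,S_3$ is genuinely exhaustive---this is exactly the observation that $o(\psi(w_{r+i}),G/R_G)$ divides $p^{a_i}$---and that discarding the non-empty set $S_3$ strictly increases the $p$-deficiency past one, which is the additivity computation displayed above.
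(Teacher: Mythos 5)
Your proposal is correct and takes essentially the same approach as the paper: discard $p$-power relators whose roots have order in $G/R_G$ strictly smaller than the exponent, verify that the resulting presentation $P$ satisfies $def_p(P)>1$ via the deficiency bookkeeping, and invoke \cref{thm: main of section}. The only cosmetic difference is that you remove all of $S_3$ so the theorem applies verbatim, whereas the paper removes only the single relator $w_{r+j}^{p^{a_j}}$ and appeals to ``the arguments in'' the theorem; both rest on the same observation that dropping such relators pushes the $p$-deficiency above one.
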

\begin{proof}
Consider the presentation $P=\langle X|S\rangle$, where $S$ consists of all the elements of $R$ except $w_{r+j}^{p^{a_j}}$. The result follows from the arguments in \cref{thm: main of section} noting that $def_p(P)>1$.
\end{proof}

\vskip 2mm

The following result says that if $Q$ is finite, the conditions in \cref{thm: main of section} imply $G$ is $p$-large. The proof of this result follows very similar arguments to the ones in Theorem $3$ of \cite{anitha}. However, the proof presented here is more general as it does not impose the condition that the presentation $Q$ must have $p$-deficiency equal to one.
\vskip 2mm

\begin{thm}\label{thm: main section p large}
Let $G$ be a finitely presented group with finite presentation $Q=\langle X|R\rangle$.  Let $P=\langle X| S\rangle $ be as in \cref{thm: main of section}. If def$_p(P)>1$, then $G$ is $p$-large.
\end{thm}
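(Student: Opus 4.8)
The plan is to mirror the proof of \cref{thm: main of section}, replacing the rank‑gradient bookkeeping by a $p$-largeness witness and exploiting the hypothesis that $Q$, hence $P$, is finite. Since $R$ is finite, so is $S=S_1\cup S_2\subseteq R$, so $P$ is a finite presentation with $def_p(P)>1$. I would apply the finite‑presentation result of \cite{jack-anitha} directly to $P$ to conclude that $K=F_n/M$ is $p$-large, while \cref{lemma pdef>1 implies RG_p>0} simultaneously gives $RG_p(K)>0$. The task is then to transport $p$-largeness from $K$ down to the quotient $G=K/(N/M)$.

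The engine is the same computation used in \cref{thm: main of section}. For any $H\in A$, that is, for the preimage in $F_n$ of any finite index normal subgroup of $G$, the relators of $S_3$ rewrite in the Schreier generators of $H$ as genuine $p$-th powers: if $u^{p^b}\in S_3$ then $o(\psi(u),G/R_G)<p^b$ forces $u^{p^{b-1}}\in H$, so $u^{p^b}=(u^{p^{b-1}})^p$. Adjoining $p$-th power relators does not alter the mod‑$p$ abelianisation, so $d_p(H/N)=d_p(H/M)$. I would feed this into a lower bound for the mod‑$p$ homology gradient of $G$: taking the lower central $p$-series $G=G_0\supseteq G_1\supseteq\cdots$ of $G$, each $G_i$ is of $p$-power index and pulls back to some $H_i\in A\subseteq B$, whence $\frac{d_p(G_i)-1}{|G:G_i|}=\frac{d_p(H_i/M)-1}{|F_n:H_i|}\ge RG_p(K)>0$. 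In particular the indices $|G:G_i|$ tend to infinity, so this fast‑descending chain is genuinely infinite.

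Thus $G$ admits a fast‑descending chain of $p$-power index normal subgroups along which the mod‑$p$ homology gradient stays bounded below by $RG_p(K)>0$. By the homological characterisation of $p$-largeness in \cite{lackers detecting large groups}, used in the same spirit as Theorem $3$ of \cite{anitha}, this is exactly the condition forcing $G$ to be $p$-large; since every subgroup in the construction has $p$-power index, the conclusion is $p$-largeness rather than mere largeness.

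The crucial subtlety, and the reason the argument must run through the homology gradient rather than through a single free quotient, is that $p$-largeness is \emph{not} transported by pushing a fixed surjection onto a non‑abelian free group through the quotient $K\to G$: adjoining the $S_3$-relators can destroy a given free quotient, since killing the $p$-th powers of both free generators of a free group of rank two produces $C_p\ast C_p$, which for $p=2$ is the non‑large group $D_\infty$. What survives is the $\mathbb{F}_p$-homological content, and the role of $def_p(P)>1$ is precisely to supply enough of it, via $RG_p(K)>0$, to rule out such degenerate collapses. The main work is therefore to check that the identity $d_p(H/N)=d_p(H/M)$ feeds correctly into the hypotheses of the detection criterion of \cite{lackers detecting large groups}, and that a fast $p$-chain realising the gradient bound can be chosen inside $A$; the lower central $p$-series of $G$ achieves this automatically, as all of its terms pull back into $A$.
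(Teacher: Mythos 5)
Your route is genuinely different from the paper's, and, up to one correction, it is viable. The paper never invokes Lackenby's homological criterion here; instead it builds an auxiliary finitely presented group $G'$ by replacing each relator $u_j^{p^{b_j}}\in S_3$ with a much higher power $u_j^{p^{b_j'}}$, chosen so that $def_p(Q')>1$, applies the theorem of \cite{jack-anitha} to $G'$ to obtain a normal subgroup $H\trianglelefteq F_n$ of $p$-power index with $H/N'$ surjecting onto $F_2$, and then shows this surjection descends to $H/N$: an infinite-order argument shows that any $u_j^{p^{b_j}}$ lying in $H$ must lie in the kernel of the surjection, and a Frattini-subgroup argument shows that all $u_j^{p^{b_j}}$ do lie in $H$ (otherwise some $u_k$ would have order exactly $p^{b_k}$ in a finite quotient of $G$, contradicting the defining property of $S_3$). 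So the paper does transport an explicit free quotient through a quotient map, just from $G'$ rather than from $K$, which is exactly how it evades the $C_p\ast C_p$ collapse you correctly identify. Your proof instead transports the numerical invariant $RG_p(K)>0$ to $G$ via the identity $d_p(H/N)=d_p(H/M)$ and feeds it into the detection theorem of \cite{lackers detecting large groups}; this is closer in spirit to how \cite{jack-anitha} prove the theorem you quote, avoids the auxiliary group, but relies on the much deeper detection criterion where the paper only needs the \cite{jack-anitha} result as a black box.

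The genuine gap is the choice of chain. Lackenby's criterion does not say that a chain of $p$-power-index normal subgroups along which $d_p(G_i)/|G:G_i|$ is bounded below forces largeness; it requires the successive quotients $G_i/G_{i+1}$ to be elementary abelian $p$-groups with $\inf_i d(G_i/G_{i+1})/|G:G_i|>0$, and $d(G_i/G_{i+1})$ can be far smaller than $d_p(G_i)$. For the lower central $p$-series, $G_{i+1}=[G_i,G]G_i^{\,p}$, the quotient $G_i/G_{i+1}$ is only the module of $G$-coinvariants of the mod-$p$ abelianisation of $G_i$; already for $G=F_2$ the Witt formula gives $d(G_i/G_{i+1})/|G:G_i|\to 0$ along this series, so the hypothesis of the criterion fails even for free groups, and your assertion that your gradient bound "is exactly the condition" is false as stated. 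The repair is immediate: use the derived (Frattini) $p$-series $G_0=G$, $G_{i+1}=[G_i,G_i]G_i^{\,p}$ instead. Its terms are characteristic of $p$-power index, hence still pull back into $A$, and for this chain one has $d(G_i/G_{i+1})=d_p(G_i)\geq 1+RG_p(K)\,|G:G_i|$ by your bound, which is precisely the hypothesis of the $p$-large form of Lackenby's theorem (this is also the chain used in \cite{jack-anitha}). With that single substitution your argument is complete and constitutes a correct alternative proof.
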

\begin{proof}
The group $G$ has the following presentation
\[
Q=\langle x_1,\ldots,x_n \mid w_1,\ldots,w_r,v_{i}^{p^{a_i}},u_{j}^{p^{b_j}}\rangle,
\]
where $i\in I$, $j\in J$, and $I$ and $J$ are finite collections of indices.
\vskip 2mm

Set $def_p(P)-1=\varepsilon$. By hypothesis $\varepsilon>0$. Consider large enough positive integers $b_j'$, such that $\sum_{j\in J}1/p^{b_j'}<\varepsilon$. In the same spirit as \cite{anitha}, let $G'$ be the group given by the presentation
\[
Q'=\langle X|R'\rangle=\langle x_1,\ldots,x_n \mid w_1,\ldots,w_r,v_{i}^{p^{a_i}},u_j^{p^{b_j'}}\rangle.
\]
\vskip 2mm

Denote the normal closures of $R$ and $R'$ in $F_n$, by $N$ and $N'$, respectively. Since $Q'$ is finitely presented and def$_p(Q')>1$, then $G'$ is $p$-large. This means there is a normal subgroup $H$ in $F_n$ with index a power of $p$, which contains $N'$, such that $H/N'$ surjects onto $F_2$.
\vskip 2mm

Say there are $l$ elements in $S_3$, $u_1^{p^{b_1}},\ldots,u_l^{p^{b_l}}$. We claim that if $u^{p^{b_i}}\in H$ for all $i$, $1\leq i\leq l$, then $H/N$ surjects onto $F_2$ and hence $G$ is $p$-large.
\vskip 2mm

Denote by $\phi$ the map from $H/N'$ to $F_2$. Let $ker(\varphi)$ be the kernel of $\phi$. Clearly, $N'\trianglelefteq ker(\varphi)\triangleleft H$. If $u^{p^{b_i}}\in H$ for some $i$, then $u^{p^{b_i}}\in ker(\varphi)$. Otherwise $\phi(u^{p^{b_i}})$ would be non-zero in $F_2$ and hence it would have infinite order in $F_2$, which implies $(u^{p^{b_i}})^{s}\notin ker(\varphi)$ for all $s\in \mathbb{Z}$. This is a contradiction as $u^{p^{b'_i}}\in N'$. As $u^{p^{b_i}}\in ker(\varphi)$, for all $i$, $1\leq i\leq l$, then $N\trianglelefteq ker(\varphi)\triangleleft H$, which means $H/N$ also surjects onto a non-abelian free group of finite rank.
\vskip 2mm

Now we prove that $u_i^{{p}^{b_i}}\in H$ for all $i$, $1\leq i\leq l$. Suppose $u_j^{p^{b_j}}\notin H$ for some $j$, $1\leq j\leq l$. We will show that this condition implies there is a finite index normal subgroup $K$ of $F_n$ that contains $N$, such that $o(u_k,F_n/K)=p^{b_k}$, for some $k$, $1\leq k\leq l$. This contradicts the assumption that $o(\psi(u_i),G/R_G)<p^{b_i}$, for all $i$, $1\leq i\leq l$.
\vskip 2mm

Let $K$ be the normal subgroup in $F_n$ generated by $\{u_1^{p^{b_1}},\ldots,u_l^{p^{b_l}}\}$ and $H$. As $u_j^{p^{b_j}}\notin H$, then $H$ is properly contained in $K$. Moreover, $u_i^{p^{b_i}}\in K$ for all $i$, $1\leq i\leq l$, hence $N$ is contained in $K$.
\vskip 2mm

As $F_n/H$ is a finite $p$-group, then $K/H$ is a finite $p$-group too. From now on, denote $K/H$ by $L$ and the image of $u_i^{p^{b_i}}$ in $L$ by $\overline{u}_i^{p^{b_i}}$, for all $i$, $1\leq i\leq l$. Consider the Frattini subgroup of $L$, $\mathfrak{F}(L)\triangleleft L$. As $L$ is a finite $p$-group, $\mathfrak{F}(L)$ is properly contained in $L$ and $\mathfrak{F}(L)=(L)^{p}[L,L]$, where $[L,L]$ denotes the commutator subgroup of $L$. Since $\mathfrak{F}(L)$ is a characteristic subgroup of $L$, then it is normal in $F_n/H$. Therefore, if $\overline{u}_i^{p^{b_i}}\in \mathfrak{F}(L)$ for all $i$, $1\leq i\leq l$, then $\mathfrak{F}(L)=L$, which is a contradiction. This means there is an element $\overline{u}_k^{p^{b_k}}$, where $1\leq k\leq l$, such that $\overline{u}_k^{p^{b_k}}\notin \mathfrak{F}(L)$.
\vskip 2mm

If $\overline{u}_k^{p^{b_k-1}}\in L$, then $(\overline{u}_k^{p^{b_k-1}})^{p}=\overline{u}_k^{p^{b_k}}$ would be in $\mathfrak{F}(L)$. As the latter cannot hold by assumption, then $\overline{u}_k^{p^{b_k-1}}\notin L$. Therefore $o(u_k,F_n/K)=p^{b_k}$, which implies $o(\psi(u_k),G/R_G)=p^{b_k}$.

\end{proof}

\vskip 2mm

\begin{cor}$(\cite{anitha}$, Part $2$ of Theorem $3)$\label{cor: p large}\\
Let $G$ be a finitely presented group with presentation $Q=\langle X|R\rangle$, such that the $p$-deficiency of $Q$ is one. Say $X$ is finite and $R=\{ w_1,\ldots,w_r,w_{r+1}^{p^{a_1}},\ldots, w_{r+l}^{p^{a_l}}\}$, where $l\geq 1$ and $a_j\geq 1$ for $1\leq j\leq l$. Suppose the order of $\psi(w_{r+j})$ in $G/R_G$ is strictly less than $p^{a_j}$, for some $j$, $1\leq j\leq l$. Then $G$ is $p$-large.
\end{cor}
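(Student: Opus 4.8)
The plan is to exhibit this corollary as a direct specialisation of \cref{thm: main section p large}; the substance of the argument is already contained there, so what remains is to match the data of the given presentation to its hypotheses and to verify the single inequality $def_p(P)>1$.

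First I would partition the relator set $R$ exactly as in the paragraph preceding \cref{thm: main of section}. The unexponentiated relators $w_1,\ldots,w_r$ form $S_1$, each contributing $\nu_p=0$. Among the proper $p$-powers $w_{r+j}^{p^{a_j}}$, I place $w_{r+j}^{p^{a_j}}$ into $S_2$ when $o(\psi(w_{r+j}),G/R_G)=p^{a_j}$ and into $S_3$ when $o(\psi(w_{r+j}),G/R_G)<p^{a_j}$. The hypothesis that this order is strictly less than $p^{a_j}$ for at least one $j$ guarantees $S_3\neq\varnothing$, and finiteness of $Q$ makes $S_3$ finite. Setting $P=\langle X\mid S_1\cup S_2\rangle$ then produces precisely the auxiliary presentation of \cref{thm: main of section}, and $P$ is finite so \cref{thm: main section p large} is applicable.

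The only computation is to confirm $def_p(P)>1$. Since $R$ is the disjoint union $S_1\cup S_2\cup S_3$ and $def_p(Q)=1$, I would write
\[
def_p(P)=n-\sum_{s\in S_1\cup S_2}p^{-\nu_p(s)}=def_p(Q)+\sum_{s\in S_3}p^{-\nu_p(s)}=1+\sum_{s\in S_3}p^{-\nu_p(s)}.
\]
Each summand is strictly positive and $S_3$ is non-empty, so $def_p(P)>1$, and \cref{thm: main section p large} applied to $P$ yields that $G$ is $p$-large. I do not anticipate a genuine obstacle, as the statement is a clean instance of the general theorem; the only care needed is bookkeeping — confirming that $w_1,\ldots,w_r$ are genuinely not proper $p$-powers (the natural reading, since such a relator would otherwise appear in the exponentiated list) so that they legitimately constitute $S_1$, and observing that discarding the $S_3$ relators is exactly the operation licensed by \cref{thm: main of section}.
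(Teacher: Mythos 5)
Your proof is correct and follows essentially the same route as the paper: both reduce the corollary to \cref{thm: main section p large} by deleting relators whose roots have order in $G/R_G$ smaller than the exhibited $p$-power and noting that the resulting presentation has $p$-deficiency greater than one. The only (harmless) difference is that the paper deletes just the single relator $w_{r+j}^{p^{a_j}}$ and then invokes ``the same arguments'' as the theorem, whereas you delete all of $S_3$, which makes your reduction a literal application of the theorem's statement.
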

\begin{proof}
Consider the presentation $P=\langle X|S\rangle$, where $S$ consists of all the elements of $R$ except $w_{r+j}^{p^{a_j}}$. Note that $def_p(P)>1$ and follow the same arguments as in \cref{thm: main section p large}.
\end{proof}
\vskip 2mm

\begin{ex}\label{ex: baumslag-solitar}
The group $B(m,n)$ given by the presentation
\[
\langle a,b \mid a^{-1}b^{m}a=b^{n}\rangle,
\]
where $m$ and $n$ are non-zero integers, is called the Baumslag-Solitar group of type $(m,n)$. These are known to be residually finite (see \cite{Meskin}) if and only if one of the three following conditions holds: $|m|=1$, $|n|=1$, or $|m|=|n|$. By \cite{moldavanskii}, the finite residual of a non-residually finite Baumslag-Solitar group, denoted by $R_{B(m,n)}$, is generated as a normal subgroup in $B(m,n)$, by the set of commutators of the form $[a^{k}b^{d}a^{-k},b]=a^{k}b^{d}a^{-k}ba^{k}b^{-d}a^{-k}b^{-1}$, where $k$ takes all possible integer values and $d$ is the greatest common divisor of $m$ and $n$. 
\vskip 2mm

Suppose $m$ and $n$ are such that $B(m,n)$ is not residually finite. Denote $[a^{k}b^{d}a^{-k},b]$ by $w_k$. By expressing $w_k$ in its reduced normal form (remember $B(m,n)$ is an HNN extension) and using Brittons Lemma (page $181$, \cite{lyndon}), then $w_k$ is non-trivial and has infinite order in $B(m,n)$. 

\vskip 2mm

Consider the free product $H=B(m,n)\ast\mathbb{Z}$. Subgroups of residually finite groups are residually finite. Since $B(m,n)$ is not residually finite, then $H$ is not residually finite. Moreover, as $B(m,n)\leqslant H$, then $R_{B(m,n)}\leqslant R_H$. Furthermore, as $R_H$ is normal in $H$, $t^{-1}R_{B(m,n)}t\leqslant R_H$, for all $t\in H$.
\vskip 2mm

Consider $u_k=t^{-1}w_kt$ for $k\in\mathbb{Z}$, where $t$ is the generator of $\mathbb{Z}$ in $H$. Consider the set $\{u_{i_l}  \}_{l\in I}$, where $I$ is a finite set of integers, and a collection of non-zero integers $\{n_{i_l}\}_{l\in I}$ with a common prime factor. Denote by $G$ the quotient of $H$ by $\langle\langle  u_{i_l}^{n_{i_l}} \rangle\rangle_{l\in I}$, the normal subgroup in $H$ generated by $\{u_{i_l}^{n_{i_{l}}}  \}_{l\in I}$. This quotient has a presentation
\[
Q=\langle a,b,t \mid a^{-1}b^{m}a=b^{n}, u_{i_{l}}^{n_{i_{l}}}=1  \rangle_{l\in I}.
\]
As $u_k\in R_H$ for all $k\in\mathbb{Z}$, then $R_G$ corresponds to $R_H$ under the canonical surjective homomorphism from $H$ to $G$. Moreover, $o(\psi(u_{i_{l}}), G/R_G)<p^{\nu_p(n_{i_{l}})}$, for all $l\in I$, where $\nu_p(n_{i_{l}})$ is the number of copies of $p$ that appear in the prime factorisation of $n_{i_{l}}$.
\vskip 2mm

Enough elements $u_{i_{l}}$ and suitable powers may be chosen, so that $Q$ has $p$-deficiency strictly less than one. However, as $o(\psi(u_{i_{l}}), G/R_G)<p^{\nu_p(u_{i_{l}})}$ for all $l\in I$ and $P=\langle a,b,t \mid a^{-1}b^{m}a=b^{n}\rangle$ has $p$-deficiency greater than one, \cref{thm: main section p large} says $G$ is $p$-large. 
\vskip 2mm

Note the construction of $G$ gives a method for constructing finite presentations of arbitrary large negative $p$-deficiency which are $p$-large. In particular, these groups are non-amenable and do not have property ($\tau$).
\vskip 2mm

\cref{thm: main of section} can be applied to the example described above to conclude $G$ has positive rank gradient even if $I$ is an infinite set of integers. However, the full force of \cref{thm: main of section} is not needed. The following argument suffices.
\vskip 2mm

Consider a finitely generated group $G$ and $N$ a normal subgroup of $G$ contained in $R_G$. Note that the finite index subgroups of $G/N$ are in bijection with the finite index subgroups of $G$. If $H$ is a finite index subgroup of $G$, then under this bijection, $H$ corresponds to $H/N$ in $G/N$. Moreover, as the commutator of any finite index subgroup of $G$ contains $R_G$, then the $p$-rank of $H$ is equal to the $p$-rank of $H/N$. Therefore, if $G$ has $p$-deficiency greater than one, then by \cref{eq: in first lemma} 
\[
0< def_p(G)-1\leq \dfrac{def_p(H)-1}{|G:H|}\leq \dfrac{d_p(H)-1}{|G:H|}
\]
\[
=\dfrac{d_p(H/N)-1}{|G:H|}\leq \dfrac{d(H/N)-1}{|G:H|},
\]
where $H$ is an arbitrary finite index normal subgroup of $G$. Then $RG(G/N)>0$ by taking the infimum over all finite index normal subgroups of $G$ on the right hand side of the inequality. A very similar argument, in the case when $R_G=N$, is used in \cite{puchta} to construct finitely generated infinite residually finite $p$-groups with positive rank gradient.
\vskip 2mm

Consider a finitely generated non-residually finite group with $RG(G)>0$, such as $H=B(m,n)\ast\mathbb{Z}$. By the argument above, any quotient of $H$ by a normal subgroup contained in the its finite residual gives a group with positive rank gradient. As $G$ is obtained by taking the quotient of $H$ by elements $u_{i_{l}}^{n_{i_{l}}}\in R_H$, then $G$ has positive rank gradient. In fact, this holds for any $n_{i_{l}}\in\mathbb{Z}$ and not just non-zero integers with a common prime factor.

\end{ex}

\vskip 2mm

\begin{ex}
The following presentation
\[
\langle a,b,s,t \ |\  [a,b]=1,a^s=(ab)^{2}, b^t=(ab)^2\rangle,
\]
where $a^s=s^{-1}as$, $b^t=t^{-1}bt$ and $[a,b]=aba^{-1}b^{-1}$, defines a group $G$ which is non-Hopfian \cite{wise}. Non-hopfian implies non-residually finite, therefore $G$ is non-residually finite.
\vskip 2mm

The author of \cite{wise} exhibited the following surjective endomorphism with non-trivial kernel. Consider $\psi:G\longrightarrow G$ defined by sending $t$ to $t$, $s$ to $s$, $a$ to $a^2$ and $b$ to $b^2$. As $\psi\big( (ab)^{s^{-1}} \big)=a$ and $\psi\big( (ab)^{t^{-1}} \big)=b$, $\psi$ is surjective. Moreover,
\[
e_G\neq[(ab)^{s^{-1}},(ab)^{t^{-1}}]\overset{\psi}{\longmapsto}[(a^2b^2)^{s^{-1}},(a^2b^2)^{t^{-1}}]=[a,b]=e_G.
\]
The fact that $e_G\neq[(ab)^{s^{-1}},(ab)^{t^{-1}}]$ may be checked by using Brittons Lemma (page $181$, \cite{lyndon}).
\vskip 2mm

Note that as $G/ker(\psi)$ is isomorphic to $G$, then the finite residual $R_G$ of $G$ contains $ker(\psi)$. Therefore, $[(ab)^{s^{-1}},(ab)^{t^{-1}}]$ is a non-trivial element of $G$ in $R_G$.
\vskip 2mm

Since $\psi\big( (ab)^{s^{-1}} \big)=a$ and $\psi\big( (ab)^{t^{-1}} \big)=b$, then $\psi\Big(\big( (ab)^{s^{-1}} (ab)^{t^{-1}}\big)^{s^{-1}}\Big)=(ab)^{s^{-1}}$ and $\psi\Big(\big( (ab)^{s^{-1}} (ab)^{t^{-1}}\big)^{t^{-1}}\Big)=(ab)^{t^{-1}}$. Define, 
\[
w_1:=[\big( (ab)^{s^{-1}} (ab)^{t^{-1}}\big)^{s^{-1}},\big( (ab)^{s^{-1}} (ab)^{t^{-1}}\big)^{t^{-1}}].
\]
As $\psi(w_1)=[(ab)^{s^{-1}},(ab)^{t^{-1}}]$ is non-trivial, then $w_1$ is non-trivial and $w_1\notin ker(\psi)$. However, as $[(ab)^{s^{-1}},(ab)^{t^{-1}}]\in ker(\psi)$, then $w_1\in ker(\psi^2)$. This means that $ker(\psi)$ is properly contained in $ker(\psi^2)$. 
\vskip 2mm

Denote $[(ab)^{s^{-1}},(ab)^{t^{-1}}]$ by $w_0$. So far we have defined $w_1$ from $w_0$ in such a way that $\psi(w_1)=w_0$. Moreover, $w_1\in ker(\psi^2)\backslash ker(\psi)$. By using induction, we now prove $ker(\psi^i)$ is properly contained in $ker(\psi^{i+1})$, for all $i\in\mathbb{N}$, by exhibiting an element $w_i\in ker(\psi^{i+1})\backslash ker(\psi^{i})$. 
\vskip 2mm

Suppose we have $w_{i-1}=[u_1,u_2]$ such that $w_{i-1}\in ker(\psi^{i})\backslash ker(\psi^{i-1})$. We define $w_i$ by $[(u_1u_2)^{s^{-1}},(u_1u_2)^{t^{-1}}]$. Assume $w_{i-1}$ was defined the same way using $w_{i-2}$. That is, $w_{i-1}=[(v_1v_2)^{s^{-1}},(v_1v_2)^{t{-1}}]$ where $w_{i-2}=[v_1,v_2]$. By the induction hypothesis, we know that $(v_1v_2)^{s^{-1}}=u_1$ goes to $v_1$ and $(v_1v_2)^{t{-1}}=u_2$ to $v_2$ under $\psi$. Therefore, $(u_1u_2)^{s^{-1}}$ goes to $(v_1v_2)^{s^{-1}}$ and $(u_1u_2)^{t^{-1}}$ to $(v_1v_2)^{t^{-1}}$. This means that $w_i$ goes to $w_{i-1}$ under $\psi$. Moreover, $w_{i-1}\in ker(\psi^{i})\backslash ker(\psi^{i-1})$. Hence, $w_i\in ker(\psi^{i+1})\backslash ker(\psi^{i})$.
\vskip 2mm

Since $\psi$ is a surjective endomorphism of $G$, then $\psi^{i}$ is too. This implies that $ker(\psi^i)\leqslant R_G$. Therefore, the elements $w_i$ are in $R_G$ and since $w_i\in ker(\psi^{i+1})\backslash ker(\psi^{i})$, for all $i\in \mathbb{N}$, then they are not conjugate to one another.
\vskip 2mm

Now construct a group $H$ from $G$ in the way it was done in \cref{ex: baumslag-solitar}: take $H=G\ast \mathbb{Z}$ which has deficiency greater than one. Let $z$ generate $\mathbb{Z}$ and consider $zw_iz^{-1}$, which we denote by $h_i$, for all $i\in\mathbb{N}$. As $R_G\leqslant R_H$ and $R_H$ is normal in $H$, then $h_i\in R_H$ for all $i\in\mathbb{N}$. Any group of the form 
\[
H/\langle\langle  h_i^{p^{a_i}}   \rangle\rangle_{i\in I},
\] 
where $a_i\geq 1$ and $I$ is a subset of $\mathbb{N}$, satisfies the hypotheses of \cref{thm: main of section}. If $I$ is finite it then satisfies the hypotheses of \cref{thm: main section p large}. In each case the group given by the presentation above has $p$-deficiency less than one but has positive rank gradient and is $p$-large (the latter condition if $I$ is finite).
\end{ex}

\section{Presentations with $p$-deficiency one}

Given an element $r$ in $F_n$, the non-abelian free group of rank $n$, we will consider its \emph{minimal root} in $F_n$, which we define as an element $u\in F_n$, such that $u^m=r$, where $m$ is the largest integer which can appear in an expression of the type $v^l=u$.
\vskip 2mm

We will also use the notion of \emph{minimal $p$-root}, which we now define. The author of \cite{puchta} defined $\nu_{p}(r)$, for $r$ in $F_n$, to be the supremum over all integers $a$, such that there exists some $w$ in $F_n$ with $r=w^{p^{a}}$. Call $w$ the \emph{minimal $p$-root} of $r$. 
\vskip 2mm

Let $r=u^m\in F_n$, where $u$ is the minimal root of $r$. Factorise $m$ as $p^{a}d$, where $a\geq 0$ and $(p,d)=1$. Then, $w=u^d$ is the minimal $p$-root of $r$.
\vskip 2mm

In this section we will use the notion of residual deficiency introduced in \cite{me} which we now present.
\vskip 2mm

\begin{defn}
Let $G$ be a finitely presented group with finite presentation $Q=\langle X|R\rangle$, 
where $X$ freely generates $F_n$, the non-abelian group of rank $n$. 
Let $R=\{u^{s_1}_1,\ldots,u^{s_m}_m\}$, where $u_i$ is the minimal root 
of $u_i^{s_i}$ for all $i$, $1\leq i\leq m$. Suppose the order of $\psi(u_i)$ in the 
residual quotient of $G$ is $k_i$, for all $i$. Then we define the 
\emph{residual deficiency} of the presentation $Q$ to be
\[
rdef(Q)=n-\sum_{i=1}^{m}\dfrac{1}{k_i}.
\]
We define the \emph{residual deficiency} of the group $G$ to be the
supremum  of the residual deficiencies defined by all finite presentations of $G$
\[
rdef(G)=\underset{\langle X|R\rangle \cong G}{\text{sup}} \left\{ rdef(Q) \right\}.
\]
\end{defn}

\vskip 1mm

In \cite{me} we proved that the following holds for some finite index subgroups of $G$
\[
rdef(G)-1\leq \dfrac{def(H)-1}{|G:H|}.
\]
\begin{rem}\label{rem: remark for thm}
This means that if the residual deficiency of $Q$ is greater than one, then the group has finite index subgroups with deficiency greater than one. This has strong consequences: having deficiency greater than one implies $p$-deficiency greater than one for every prime $p$ which implies $p$-largeness for every prime $p$ and positive rank gradient. Moreover, if the residual deficiency is equal to one, then there exists a finite index normal subgroup $H$ of $G$ with deficiency at least one. As the deficiency of $H$ is a lower bound for the rank of the abelianisation of $H$, then $H$ surjects onto $\mathbb{Z}$.
\end{rem}
\vskip 2mm

\begin{thm}\label{thm: main one}
Let $G$ be a finitely presented group and $Q=\langle X|R\rangle$ a finite presentation for
$G$ with $p$-deficiency one. Let $R=\{r_1,\ldots,r_d \}$ and let
$r_i=u_i^{m_i}=w_i^{p^{a_i}}$, for all $i$, $1\leq i\leq d$, where $u_i$ and
$w_i$ are the minimal and the $p$-minimal root of $r_i$,
respectively. Denote $o(\psi(u_i),G/R_G)$ by
$k_i$ and $o(\psi(w_i),G/R_G)$ by $l_i$ for all $i$,
$1\leq i\leq d$. Then,
\begin{enumerate}
\item If $l_i<p^{a_i}$ for some $i$, then $G$ is $p$-large and $RG(G)>0$. 
\item Suppose $l_i=p^{a_i}$ for all $i$. If $l_i<k_i$ for some $i$, then $G$ has a finite index subgroup which is $p$-large. Moreover, $G$ is large and $RG(G)>0$.
\end{enumerate}
\end{thm}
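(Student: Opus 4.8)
The plan is to treat the two parts separately: Part 1 will follow directly from the corollaries of \cref{thm: main of section}, while Part 2 will be reduced to the residual deficiency machinery of this section. For Part 1, the point is that, once the definitions are unwound, the hypothesis that $l_i<p^{a_i}$ for some $i$ is exactly the hypothesis of \cref{cor: rg>0 when order of minimal p root is less than its p power} and of \cref{cor: p large}. Indeed, writing $r_i=w_i^{p^{a_i}}$ with $w_i$ the minimal $p$-root, the inequality $l_i<p^{a_i}$ forces $a_i\geq 1$ (as $l_i\geq 1$), so $r_i$ is a genuine proper $p$-power relator and its $p$-root satisfies $o(\psi(w_i),G/R_G)=l_i<p^{a_i}$; this is precisely the condition placing $r_i$ in the set $S_3$ of \cref{thm: main of section}. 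Hence \cref{cor: rg>0 when order of minimal p root is less than its p power} gives $RG(G)>0$ and \cref{cor: p large} gives that $G$ is $p$-large, so Part 1 is immediate.

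For Part 2 the strategy is to show that the extra hypotheses force $rdef(Q)>1$ and then to invoke \cref{rem: remark for thm}. First I would record the arithmetic linking $l_i$ and $k_i$. Factorising $m_i=p^{a_i}d_i$ with $(p,d_i)=1$ gives $w_i=u_i^{d_i}$, so $\psi(w_i)=\psi(u_i)^{d_i}$ and therefore
\[
l_i=o(\psi(u_i)^{d_i},G/R_G)=\frac{k_i}{\gcd(k_i,d_i)},
\]
where $k_i$ is finite because $l_i=p^{a_i}$ is. In particular $l_i\mid k_i$, so $k_i=l_i\,\gcd(k_i,d_i)\geq l_i=p^{a_i}$, with strict inequality exactly when $l_i<k_i$, i.e.\ when $\gcd(k_i,d_i)>1$.

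Next I would compare the two deficiencies. Since $\nu_p(r_i)=a_i$, the hypothesis $def_p(Q)=1$ reads $n-\sum_i p^{-a_i}=1$; and from $k_i\geq p^{a_i}$ we get $1/k_i\leq 1/p^{a_i}$ for every $i$, strictly at any index where $l_i<k_i$. Summing,
\[
rdef(Q)=n-\sum_i\frac{1}{k_i}>n-\sum_i\frac{1}{p^{a_i}}=def_p(Q)=1.
\]
By \cref{rem: remark for thm}, $rdef(Q)>1$ produces a finite index subgroup $H$ of $G$ with $def(H)>1$; such an $H$ is $p$-large and has $RG(H)>0$, which is the first assertion of Part 2. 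Finally, as both largeness and positive rank gradient pass to finite index supergroups and $H$ has finite index in $G$, it follows that $G$ is large and $RG(G)>0$.

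The main obstacle is establishing $rdef(Q)>1$: everything hinges on correctly relating the order $l_i$ of the image of the minimal $p$-root to the order $k_i$ of the image of the minimal root via the factor $\gcd(k_i,d_i)$, and on observing that under the standing hypotheses $l_i=p^{a_i}$ for all $i$ and $def_p(Q)=1$ the two deficiencies differ by exactly $\sum_i\bigl(p^{-a_i}-k_i^{-1}\bigr)$, which is strictly positive precisely when $l_i<k_i$ for some $i$. Once this is secured, the rest is a routine application of the results already established.
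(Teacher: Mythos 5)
Your proposal is correct and follows essentially the same route as the paper: Part 1 by direct appeal to \cref{cor: rg>0 when order of minimal p root is less than its p power} and \cref{cor: p large}, and Part 2 by showing $rdef(Q)>1$ and invoking \cref{rem: remark for thm} together with the invariance of largeness and positive rank gradient under finite index supergroups. In fact you supply more detail than the paper does, which simply asserts that the residual deficiency exceeds one, whereas you justify it via the relation $l_i=k_i/\gcd(k_i,d_i)$, giving $k_i\geq p^{a_i}$ with strict inequality at some index.
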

\begin{proof}
\begin{enumerate}
\item Follows from \cref{cor: rg>0 when order of minimal p root is less than its p power} and \cref{cor: p large}.
\item If $l_i=p^{a_i}$ for all $i$ and $l_i< k_i$ for some $i$, then the residual deficiency of $G$ is greater than one and by \cref{rem: remark for thm}, $G$ has the desired properties.

\end{enumerate}

\end{proof}
\vskip 2mm

\begin{rem}\label{rem: afer main thm}
Since $l_i\leq k_i$ and $l_i\leq p^{a_i}$, for all $i$, $1\leq i\leq d$, then, with the exception of when $k_i=l_i=p^{a_i}$ for all $i$, all possible relationships between $l_i$, $k_i$ and $p^{a_i}$ are considered in \cref{thm: main one}. When $k_i=l_i=p^{a_i}$ for all $i$, the residual deficiency is equal to one and hence, by \cref{rem: remark for thm}, $G$ has a finite index subgroup which surjects onto $\mathbb{Z}$.
\end{rem}
\vskip 2mm

\begin{rem}
The author of \cite{anitha} proves that if def$_p(Q)=1$, then $G$ has a finite index subgroup that surjects onto $\mathbb{Z}$. The proof is split into two cases. The first considers the situation when $l_i=k_i=p^{a_i}$. Here, the author of \cite{anitha} makes use of a result in \cite{allcock} to conclude $G$ has a finite index subgroup that surjects onto $\mathbb{Z}$. When $l_i<p^{a_i}$, the author of \cite{anitha} proves \cref{cor: p large} (\cite{anitha}, Theorem $3$ part $2$) to conclude $G$ is $p$-large. 
\end{rem}
\vskip 2mm

\begin{cor}
Let $G$ be a finitely presented group with a finite presentation $Q$ such that def$_p(Q)=1$. Then $G$ does not have property $(\tau)$. In particular it does not have property $(T)$. Moreover, $G$ is non-amenable with the possible exception of when $k_i=l_i=p^{a_i}$.

\end{cor}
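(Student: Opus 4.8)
The plan is to reduce everything to the trichotomy already established in \cref{thm: main one} and \cref{rem: afer main thm}. Since $def_p(Q)=1$, the inequalities $l_i\leq k_i$ and $l_i\leq p^{a_i}$ valid for all $i$ mean that, after discarding nothing and simply inspecting the indices, exactly one of the following situations occurs: (a) $l_i<p^{a_i}$ for some $i$; (b) $l_i=p^{a_i}$ for all $i$ but $l_i<k_i$ for some $i$; or (c) $k_i=l_i=p^{a_i}$ for all $i$. The first two are precisely the two parts of \cref{thm: main one}, and the last is the exceptional situation singled out in \cref{rem: afer main thm}. First I would record what each case yields: in (a) and (b) the group $G$ is large, whereas in (c) \cref{rem: afer main thm} tells us only that $G$ has a finite index subgroup that surjects onto $\mathbb{Z}$.

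Next I would treat property $(\tau)$ uniformly across all three cases, since the common feature is that $G$ always has a finite index subgroup surjecting onto $\mathbb{Z}$. In cases (a) and (b), largeness provides a finite index normal subgroup $H$ with a non-abelian free quotient, and a non-abelian free group surjects onto $\mathbb{Z}$; composing the two surjections exhibits a finite index subgroup of $G$ that surjects onto $\mathbb{Z}$. In case (c) such a subgroup is handed to us directly. As recalled in the introduction, $\mathbb{Z}$ fails property $(\tau)$, and $(\tau)$ is inherited by finite index subgroups and by quotients; therefore in every case $G$ cannot have $(\tau)$. Because property $(T)$ implies property $(\tau)$, the failure of $(\tau)$ immediately forces the failure of $(T)$, giving the second assertion.

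For non-amenability I would separate off the exceptional case. In cases (a) and (b) the group $G$ is large, hence contains a non-abelian free subgroup, and a group with a non-abelian free subgroup is non-amenable; this settles non-amenability whenever we are not in case (c), that is, whenever $k_i=l_i=p^{a_i}$ fails for at least one $i$. The main, and essentially the only, obstacle is case (c): here the results available deliver merely a finite index subgroup surjecting onto $\mathbb{Z}$, and such a property does not entail non-amenability, the archetype being $\mathbb{Z}$ itself, which has $p$-deficiency one and is amenable. I would therefore not attempt to prove non-amenability in this case, but present it exactly as the stated exception, emphasising that it is a genuine exception rather than a gap in the argument: largeness, which is what produces non-amenability in the other two cases, is precisely what we cannot guarantee when $k_i=l_i=p^{a_i}$ for all $i$.
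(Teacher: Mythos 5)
Your proposal is correct and takes essentially the same approach as the paper: the same trichotomy coming from \cref{thm: main one} and \cref{rem: afer main thm}, with largeness supplying non-amenability and the failure of $(\tau)$ in the first two cases, and the finite index subgroup surjecting onto $\mathbb{Z}$ handling $(\tau)$ in the exceptional case $k_i=l_i=p^{a_i}$. The only difference is cosmetic: where the paper simply invokes ``$p$-largeness implies not having property $(\tau)$'' (justified in its introduction), you unwind that implication into the explicit chain of surjections onto a non-abelian free group and then onto $\mathbb{Z}$.
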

\begin{proof}
By \cref{thm: main one} the group $G$ is $p$-large unless $k_i=l_i=p^{a_i}$. As $p$-largeness implies non-amenability and not having property ($\tau$), the result follows in this case.
\vskip 2mm

If $k_i=l_i=p^{a_i}$, then by \cref{rem: afer main thm} we have that $G$ has a finite index subgroup $H$ that surjects onto $\mathbb{Z}$. As property ($\tau$) is invariant under finite index subgroups and quotients, and as $\mathbb{Z}$ does not have property ($\tau$), then $G$ does not have property ($\tau$). 
\end{proof}

\section{Examples}

\begin{ex}

Consider the generalised triangle group given by the presentation
\[
\langle a,b \mid a^3,b^3,w^{3n}\rangle,
\]
where
\[ 
w = a_1^{r_1} b_1^{s_1}\cdots a_k^{r_k} b_k^{s_k} \ \ \ \ \ (k\geq 1,1\leq r_i < 3,1 \leq s_i < 3), \ \ \ n>1.
\]
The $3$-deficiency of this presentation is one and its $p$-deficiency is less than one for any other prime. However, by \cite{gen traingle groups} these groups are residually finite and the order of the elements $a$, $b$ and $w$ in the corresponding generalised triangle group is the one given in the presentation, hence the residual deficiency of the group is $2-2/3-1/3n$, which is greater than one if $n>1$. Moreover, if $n>1$, $o(\psi(w),G/R_G)>3$, which then implies $G$ has a finite index normal subgroup $H$ with def$_p(H)>1$.
\vskip 2mm

A similar thing holds for other well known families of residually finite groups such as the Coxeter groups. 
\vskip 2mm

Remember that a Coxeter group is given by a presentation of the following type
\[
\langle a_1,\ldots,a_n  \mid a_1^{2},\ldots,a_n^{2},(a_ia_j)^{m_{ij}},1\leq i<j\leq n \rangle,
\]
where $m_{ij}\geq 2$. The value $m_{ij}$ may be $\infty$ in which case the word $(a_ia_j)^{m_{ij}}$ is omitted from the set of relators. Coxeter groups are known to be residually finite and the order of the elements $a_1,\ldots,a_n$ and $a_ia_j$, in the group defined by the presentation above, is the one specified in the presentation. Just as with the generalised triangle groups, it is easy to find examples of presentations for Coxeter groups which have $2$-deficiency less than or equal to one, but which have residual deficiency greater than one and have finite index subgroups with $p$-deficiency greater than one.
\end{ex}

\vskip 2mm

\begin{ex}
Consider the group $G$ given by the presentation 
\[
Q=\langle x_1,x_2,t \mid v_1^{p},\ldots, v_{2p-1}^{p},w^{pq}\rangle,
\]
where $p$ and $q$ are distinct prime numbers. Suppose that for all $i$, $1\leq i\leq 2p-1$, $v_i$ and $w$ cannot be expressed as a proper power of some other element in $F_3$. Denote by
\[
\varphi: F_3\longrightarrow C_p \times C_p\times C_q
\]
the map defined by sending the generators $x_1$, $x_2$ and $t$, to $(1,0,0)$, $(0,1,0)$ and $(0,0,1)$, respectively. Moreover, denote by
\[
\psi_1: F_3\longrightarrow C_p\times C_p, \ \ \ \ \ \ 
\psi_2: F_3\longrightarrow C_q,
\]
the maps defined by composing $\varphi$ with the projection onto the first two factors, in the case of $\psi_1$, and $\varphi$ composed with the projection onto the third factor, in the case of $\psi_2$.
\vskip 2mm

Assume $\sigma_t(v_i)$, the power sum of $t$ in $v_i$, is zero for all $i$, $1\leq i\leq 2p-1$, and $\sigma_t(w)\not\equiv 0$ (mod $q$). Also assume $\psi_1(v_i)\neq 0$ and $\psi_1(w)\neq 0$, for all $i$. Note that the map $\varphi$ lifts to a map $\overline{\varphi}$ from $G$ to $C_p\times C_p\times C_q$, and the order of $\overline{\varphi}(v_i)$ in $C_p\times C_p\times C_q$, for all $i$, is $p$, while the order of $\overline{\varphi}(w)$ in $C_p\times C_p\times C_q$ is $pq$. Therefore, the $q$-deficiency of $Q$ is less than one, the $p$-deficiency is one, but the residual deficiency is greater than or equal to $3-(2p-1)/p-1/pq$ which is greater than one.
\end{ex}
\vskip 2mm

\begin{ex}
Consider the group $G$ constructed in \cref{ex: baumslag-solitar}
\[
G\cong \big ( B(m,n)\ast \mathbb{Z}\big )/\langle\langle u_{i}^{n_{i}}   \rangle\rangle_{i\in I}.
\]
Suppose $I=\{1,\ldots,p\}$, $n_{i}=p$ for all $i\in I$, and that for $1\leq i\leq p$, $u_i$ is not a proper power of any other element. Hence, $G$ admits a presentation
\[
Q=\langle a,b,t \mid b^{-1}a^{m}b=b^{n}, u_{1}^{p}=\cdots =u_p^{p}=1  \rangle,
\]
which has $p$-deficiency one.
\vskip 2mm

Given that $u_{1}^{p},\ldots,u_p^{p}\in R_{H}$, where $H=B(m,n)\ast \mathbb{Z}$, then $o(\psi(u_i),G/R_G)=1<p$ for all $i$, $1\leq i\leq p$, and hence by \cref{thm: main one}
part $2$, $G$ is $p$-large and $RG(G)>0$. 
\end{ex}
\vskip 2mm

As observed in \cref{rem: afer main thm}, if $k_i=l_i=p^{a_i}$ for all $i$, $1\leq i\leq d$, then there is a finite index normal subgroup $H$ in $G$ which surjects onto $\mathbb{Z}$. Within the class of groups that satisfy these conditions, there are examples of groups which are $p$-large  and examples of groups which are not $p$-large, just as there are examples of groups which have positive rank gradient and others that do not. Therefore, the results presented so far are insufficient for concluding or discarding stronger properties when the finitely presented group has $p$-deficiency one and $k_i=l_i=p^{a_i}$.
\vskip 2mm

\begin{ex}
Consider a Coxeter group $C$, given by the presentation
\[
\langle a_1,\ldots,a_n  \mid a_1^{2},\ldots,a_n^{2},(a_ia_j)^{m_{ij}},1\leq i<j\leq n \rangle,
\]
where all the labels $m_{ij}$ are a power of a prime $p$ (or $\infty$). The $p$-Coxeter subgroup of $C$, as defined in \cite{jack-anitha}, is the index two subgroup given by the kernel of $\theta: C\longrightarrow C_2$, where $\theta(x_i)=1$, for all $i$, $1\leq i\leq n$. In \cite{jack-anitha}, a presentation for the $p$-Coxeter subgroup of $C$ is computed using the presentation for $C$ and the Reidemeister-Schreier rewriting process. The presentation for the $p$-Coxeter subgroup thus obtained, is given by
\[
\langle x_1,\ldots,x_{n-1}  \mid x_1^{m_{12}},\ldots,x_{n-1}^{m_{1n}},(x_{i-1}x_{j-1})^{m_{ij}},2\leq i<j\leq n \rangle.
\]
Denote by $S_n(p)$ the $p$-Coxeter subgroup of $C$. As $S_n(p)$ is a subgroup of $C$ and $C$ is residually finite, then $S_n(p)$ is residually finite. Moreover, the words $x_1,\ldots,x_{n-1}$ and $x_{i-1}x_{j-1}$, for $2\leq i<j\leq n $, are non trivial in $S_n(p)$ and hence its residual deficiency is
\[
n-1-\sum^{n}_{j=2}\dfrac{1}{m_{1j}}-\sum_{2\leq i<j\leq n}\dfrac{1}{m_{ij}},
\]
where $1/m_{ij}=0$ if $m_{ij}=\infty$.
\vskip 2mm

In particular, consider $C$ the Coxeter group given by
\[
\langle a_1,\ldots,a_4  \mid a_1^{2},\ldots,a_4^{2},(a_ia_j)^{3} \rangle,\ \ \ \ 1\leq i<j\leq 4,
\]
and its $3$-Coxeter subgroup $S_4(3)$ given by
\[
\langle x_1,x_2,x_{3}  \mid x_1^{3},x_2^{3},x_3^{3},(x_{1}x_{2})^{3}, (x_{1}x_{3})^{3}, (x_{2}^{-1}x_{3})^{3} \rangle.
\]
The $3$-deficiency and residual deficiency of the previous presentation are both one, and $k_i=l_i=3$, for $i=1,\ldots,6$. However, this group was proved to be $3$-large in \cite{jack-anitha}. To our knowledge, it is not known whether $RG(S_4(3))>0$ or $RG(S_4(3))=0$. 
\vskip 2mm

On the other hand, the generalised triangle group given by
\[
\langle a_1,a_2 \mid a_1^{3}, a_2^{3}, (a_1a_2)^{3}\rangle,
\]
has $3$-deficiency and residual deficiency equal to one, but has a subgroup of index three which is isomorphic to $\mathbb{Z}\times \mathbb{Z}$. Therefore, this group is not large and does not have positive rank gradient. 

\end{ex}

\end{document}